\documentclass[11pt,reqno]{amsart}
\usepackage{amssymb,amsthm,amsmath,tikz}

\setlength{\textwidth}{6in}
\setlength{\textheight}{8.5in}
\hoffset=-.5in 
\voffset=-.25in

\begin{document}

\newtheorem*{unnumthm}{Theorem}
\newtheorem{thm}{Theorem}[section]
\newtheorem{cor}[thm]{Corollary}
\newtheorem{lem}[thm]{Lemma}
\newtheorem{prop}[thm]{Proposition}
\theoremstyle{definition}
\newtheorem{defn}[thm]{Definition}
\newtheorem{ex}[thm]{Example}

\newcommand{\bm}[1]{\mbox{\boldmath $#1$}}
\newcommand{\sr}{\mbox{$\mathcal{R}$}}
\renewcommand{\sc}{\mbox{$\mathcal{C}$}}
\newcommand{\R}{\mbox{${R}$}}
\newcommand{\C}{\mbox{${C}$}}
\renewcommand{\L}{\mbox{$\lambda$}}
\newcommand{\sq}{\mbox{$\lozenge$}}
\newcommand{\h}{\mbox{$h$}}

\renewcommand{\subjclassname}{\textup{2010} Mathematics Subject Classification}

\title{Sort-Invariant Non-Messing-Up}

\author{Bridget Eileen Tenner}
\email{bridget@math.depaul.edu}
\address{Department of Mathematical Sciences, DePaul University, Chicago, Illinois, USA}

\subjclass[2010]{Primary 06A07; Secondary 68P10, 06A05}
\keywords{linear extension, non-messing-up, poset, sorting}

\begin{abstract}
A poset has the non-messing-up property if it has two covering sets of disjoint saturated chains so that for any labeling of the poset, sorting the labels along one set of chains and then sorting the labels along the other set yields a linear extension of the poset.  The linear extension yielded by thus twice sorting a labeled non-messing-up poset may be independent of which sort was performed first.  Here we characterize such sort-invariant labelings for convex subposets of a cylinder.  They are completely determined by avoidance of a particular subpattern: a diamond of four elements whose smallest two labels appear at opposite points.
\end{abstract}

\maketitle

\section{Introduction}

The non-messing-up property of matrices is a well-known result about sorting the entries in a matrix.  It appears in work by Boerner \cite{boerner}, Gale and Karp \cite{gale-karp}, Knuth \cite{knuth}, and Winkler \cite{winkler}, and a detailed discussion of its provenance occurs in \cite{nmut}.  Given a matrix $M$, let $\sc(M)$ be the matrix obtained by ordering the entries within each column of $M$, and likewise let $\sr(M)$ be the matrix obtained by ordering the entries within each row of $M$.

\begin{ex}\label{ex:sorting}
Let
$$M = \begin{bmatrix}4 & 9 & 7 & 8\\12 & 5 & 1 & 10\\2 & 6 & 11 & 3\end{bmatrix}.$$
Then the matrices $\sc(M)$ and $\sr(M)$ are
$$\sc(M) = \begin{bmatrix}2 & 5 & 1 & 3\\4 & 6 & 7 & 8\\12 & 9 & 11 &10\end{bmatrix} \hspace{.25in} \text{and} \hspace{.25in} \sr(M) = \begin{bmatrix}4 & 7 & 8 & 9\\1 & 5 & 10 & 12\\2 & 3 & 6 &11\end{bmatrix}.$$
Moreover,
\begin{equation}\label{eqn:array-srsc}
\sr\sc(M) = \begin{bmatrix}1 & 2 & 3 & 5\\4 & 6 & 7 & 8\\9 & 10 & 11 & 12\end{bmatrix} \hspace{.25in} \text{and} \hspace{.25in} \sc\sr(M) = \begin{bmatrix}1 & 3 & 6 & 9\\2 & 5 & 8 & 11\\4 & 7 & 10 & 12\end{bmatrix}.
\end{equation}

\end{ex}

\begin{thm}[Non-messing-up matrices]\label{thm:nmut array}
Consider a matrix $M$.  The entries in each column in the matrix $\sr\sc(M)$ are in non-decreasing order; that is, $\sr\sc(M) = \sc\sr\sc(M)$.
\end{thm}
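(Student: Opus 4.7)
The plan is to reduce the theorem to the following lemma: \emph{if a matrix $A$ already has every column weakly increasing from top to bottom, then $\sr(A)$ also has every column weakly increasing}. Granting this, apply it with $A = \sc(M)$, whose columns are sorted by construction; then $\sr\sc(M)$ has sorted columns, so an additional column-sort does nothing, yielding $\sr\sc(M) = \sc\sr\sc(M)$ as desired.

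To prove the lemma I would compare two consecutive rows $i$ and $i+1$. Write $B = \sr(A)$, so that $B_{i,k}$ is the $k$-th smallest entry of row $i$ of $A$; the goal is $B_{i,k} \leq B_{i+1,k}$ for every $k$. The key counting observation is: let $J \subseteq \{1, \ldots, n\}$ be a set of $k$ column indices witnessing the $k$ smallest entries of row $i+1$ of $A$, so that $A_{i+1,j} \leq B_{i+1,k}$ for every $j \in J$. The hypothesis that $A$'s columns are sorted gives $A_{i,j} \leq A_{i+1,j} \leq B_{i+1,k}$ for each such $j$. Thus row $i$ of $A$ contains at least $k$ entries not exceeding $B_{i+1,k}$, and so its $k$-th smallest, which is $B_{i,k}$, satisfies $B_{i,k} \leq B_{i+1,k}$.

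The main obstacle is really only the bookkeeping around ties: the phrase ``$k$-th smallest'' and the witness set $J$ must be chosen consistently, but any fixed tie-breaking convention (for instance, breaking ties by original column index) leaves the monotone inequalities intact. Once the lemma is in hand, the theorem follows immediately from the one-line application above; an alternative packaging, should it read more cleanly, is to induct on the total number of inversions in $M$ and show that each swap performed by $\sr$ after $\sc$ has been applied cannot create a column inversion, but the counting argument sketched above avoids the need for any such bookkeeping.
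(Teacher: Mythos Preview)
Your argument is correct and is essentially the classical proof of the non-messing-up phenomenon: reduce to the lemma that row-sorting a column-monotone matrix preserves column-monotonicity, and prove that lemma by the pigeonhole-style count you give (the $k$ columns achieving the $k$ smallest entries of row $i+1$ furnish, via the column inequality $A_{i,j}\le A_{i+1,j}$, at least $k$ entries of row $i$ that are $\le B_{i+1,k}$). The tie-handling remark is fine; in fact no tie-breaking convention is needed, since $B_{i,k}$ is simply the $k$-th order statistic of row $i$ and the inequality ``at least $k$ entries $\le t$ implies the $k$-th order statistic is $\le t$'' holds regardless of multiplicities.

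There is nothing to compare against in the paper itself: Theorem~\ref{thm:nmut array} is stated there as a known result, with proof deferred to the references (Boerner, Gale--Karp, Knuth, Winkler) and to \cite{nmut}. Your proof is exactly the standard one found in those sources, so you may present it with confidence. The alternative induction on inversions that you mention at the end is unnecessary; the counting argument is both shorter and cleaner.
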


Less formally, Theorem~\ref{thm:nmut array} says that after sorting the data within each column of a matrix, sorting the data within each row of the subsequent matrix does not ``mess up'' the fact that each column's data are sorted.  Note that by symmetry, the entries in the rows of the matrix $\sc\sr(M)$ must also be sorted, so $\sc\sr(M) = \sr\sc\sr(M)$ as well.

The expressions of \eqref{eqn:array-srsc} provide an example of the non-messing-up property of matrices.  Note from these that it is not necessarily the case that the matrices $\sr\sc(M)$ and $\sc\sr(M)$ coincide; that is, the sorting operations do not necessarily commute.  It is this issue which we address here.  Moreover, we examine it for more general non-messing-up posets, which we previously treated in \cite{nmut}.

In Section~\ref{sec:nmu} we review the results of \cite{nmut} and define the central object of this paper: a transverse non-messing-up poset.  Our main result, stated precisely in Theorem~\ref{thm:sort-invariance}, is that the sorting operations commute on a labeling of a transverse non-messing-up poset if and only if the labeling avoids a particular forbidden subpattern (subject to some degeneracies) consisting of a diamond of four elements whose smallest labels appear at opposite points.   Unfortunately this result does not hold for non-transverse non-messing-up posets, as shown in Section~\ref{sec:non-transverse}.  The paper concludes with a brief discussion of sorted matrices and when they have a preferred sorting procedure; that is, when it is more likely that a sorted matrix arises from first column-sorting and then row-sorting, or vice versa.

\section{Non-messing-up posets}\label{sec:nmu}

The goal of this paper is to describe exactly when the sorting operations $\sr$ and $\sc$ commute on a labeling, and we do this in the context of non-messing-up posets.  Previously, in \cite{nmut}, we defined and characterized a generalization of the non-messing-up phenomenon to the setting of posets.  That work is summarized here, and more details can be found in \cite{nmut}.

Generalizing to the realm of posets is natural because an $r$-by-$c$ matrix resembles the poset $\bm{r} \times \bm{c}$, a product of two chains.  The rows and columns of the matrix correspond to two sets of disjoint saturated chains covering the elements of $\bm{r} \times \bm{c}$.  Sorting a chain means putting that chain's labels in order so that the minimum element in the chain has the minimum label.  Thus the non-messing-up property indicates that sorting the labels along both the rows and the columns gives a linear extension of $\bm{r} \times \bm{c}$.

\begin{defn}
Let $P$ be a finite poset.  A \emph{gridwork} for $P$ is a pair $(\R,\C)$ where $\R$ and $\C$ are each sets of disjoint saturated chains covering the elements of $P$, and where each covering relation in $P$ is contained in an element of $\R \cup \C$.  We call the elements of $\R$ the \emph{rows} of $P$, and the elements of $\C$ the \emph{columns} of $P$.
\end{defn}

\begin{defn}
Let $P$ be a finite poset with gridwork $(\R,\C)$.  The gridwork is \emph{transverse} if each row and each column intersect at most once.
\end{defn}

\begin{defn}
Let $P$ be a finite poset with a transverse gridwork $(\R,\C)$.  Given a labeling of the elements of $P$, let $\sr$ be the operation which sorts the labels within each row of the poset, so that the minimum element in a row has the minimum label of that row.  Likewise, let $\sc$ be the operation which sorts the labels within each column of the poset.
\end{defn}

We are now able to carry the notion of non-messing-up to the setting of posets.

\begin{defn}\label{defn:nmu}
A poset $P$ has the \emph{transverse non-messing-up} property, or is \emph{transverse non-messing-up}, if there exists a transverse gridwork such that, for any labeling $\L$ of the elements of $P$, both $\sr\sc(\L)$ and $\sc\sr(\L)$ yield linear extensions of $P$; that is, $\sr\sc(\L) = \sc\sr\sc(\L)$ and $\sc\sr(\L) = \sr\sc\sr(\L)$.
\end{defn}

Before stating the characterization of transverse non-messing-up posets found in \cite{nmut}, we must state one more definition.

\begin{defn}
Fix positive integers $k$ and $n$ so that $k < n$.  The \emph{cylinder poset} $Cyl_{k,n}$ is $\mathbb{Z}^2/(-k,n-k)\mathbb{Z}$. The partial order on $Cyl_{k,n}$ is induced by the componentwise partial order on $\mathbb{Z}^2$.
\end{defn}

\begin{thm}[\cite{nmut}]\label{thm:nmut}
A poset is transverse non-messing-up if and only if each of its connected components $P$ is a convex subposet of a cylinder poset.
\end{thm}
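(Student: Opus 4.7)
The plan is to prove both directions of the biconditional, the ``if'' being relatively direct and the ``only if'' requiring more careful structural analysis.

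For the ``if'' direction, suppose each connected component of $P$ is a convex subposet of some $Cyl_{k,n}$. I would equip the cylinder with its natural gridwork inherited from $\mathbb{Z}^2$: rows are images of horizontal lines (fixed second coordinate) and columns are images of vertical lines (fixed first coordinate) under the quotient by $(-k,n-k)\mathbb{Z}$. These are saturated chains, and any horizontal line meets any vertical line in at most one point of $\mathbb{Z}^2$, so the gridwork is transverse. Intersecting with the convex subposet $P$ preserves saturated-chain structure, and convexity guarantees that every covering relation of $P$ lies in some row or column. To verify the non-messing-up property itself, I would adapt the pigeonhole argument of the classical matrix case (Theorem~\ref{thm:nmut array}): after $\sc$, the bottom $i$ entries of each column hold the $i$ smallest labels of that column, and $\sr$ preserves this invariant because sorting along a row respects the componentwise order inherited from $\mathbb{Z}^2$.

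For the ``only if'' direction, let $P$ be a connected transverse non-messing-up poset with gridwork $(\R,\C)$. Each element lies in exactly one row and one column (by disjointness within $\R$ and $\C$, together with the covering condition), which gives $P$ local grid coordinates. I would then argue by ruling out forbidden configurations: if $P$ contains a subposet that cannot be embedded convexly in any cylinder---for instance, a non-convex inclusion of a grid fragment, or a combinatorial ``fork'' where the gridwork branches inconsistently---then a targeted labeling with the smallest labels placed at carefully chosen positions will force $\sr\sc(\L) \neq \sc\sr\sc(\L)$, contradicting the non-messing-up hypothesis. Having eliminated such obstructions, I would assemble an explicit embedding: fix a base row and a base column as axes, assign $\mathbb{Z}^2$-coordinates to every element by tracing chains in $\R$ and $\C$, and show that any closed walk in the gridwork picks up a single translation of the form $j(-k,n-k)$, which identifies the appropriate cylinder.

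The main obstacle will be this final global assembly step. Local non-messing-up constraints force neighboring elements to fit together like a fragment of the standard grid, but the wrapping behavior that produces a cylinder---rather than a planar piece of $\mathbb{Z}^2$---is invisible locally. The parameters $k$ and $n$ must be extracted by following rows and columns all the way around the gridwork and observing how they close up. Ruling out pathological global structures---configurations that look cylindrical on every neighborhood but globally produce a M\"obius-like twist or an inconsistent identification---is where I expect the technical heart of the argument to lie, and it is where a careful choice of diagnostic labelings (rather than purely local analysis) will be indispensable.
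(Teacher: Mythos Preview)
This theorem is not proved in the present paper at all: it is quoted from \cite{nmut} (note the citation in the theorem heading), and the paper explicitly says ``The reader is encouraged to find the details in \cite{nmut}.'' There is therefore no proof here against which to compare your proposal.

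As to the proposal itself, it is a plausible outline but not yet a proof. In the ``if'' direction, your description of the gridwork on $Cyl_{k,n}$ is correct (horizontal and vertical lines in $\mathbb{Z}^2$ inject into the quotient and meet transversely), but the verification of the non-messing-up property is just a pointer to the matrix argument; the actual work is checking that the pigeonhole reasoning survives the cylindrical identification and the passage to an arbitrary convex subposet, and you have not done that. In the ``only if'' direction you have, to your credit, correctly located the difficulty: the local structure is easy, and the content lies in controlling the global monodromy so that the only possible wrapping is a single cylinder identification. But your plan to ``rule out forbidden configurations'' by exhibiting bad labelings is not yet an argument---you would need a finite list of obstructions together with, for each one, an explicit labeling that witnesses failure of non-messing-up, and then a converse showing that absence of all obstructions forces a cylinder embedding. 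None of that is supplied. If you want to carry this through, consult \cite{nmut} directly, since that is where the characterization is actually established.
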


The gridwork for a non-messing-up poset is specified in \cite{nmut}, and the sets $\R$ and $\C$ are analogous to the rows and columns of the motivating matrix example.  The reader is encouraged to find the details in \cite{nmut}.

Note that one can also consider non-transverse non-messing-up posets, and those are also classified in \cite{nmut}.  Such posets differ from transverse non-messing-up posets by allowing a row and a column to intersect more than once.  Each non-transverse non-messing-up poset is obtained from a transverse one by replacing one or more elements by chains, subject to length restrictions.  Not only does a non-transverse non-messing-up poset look quite different from the motivating matrix situation, but there is some redundancy in the sorting operations since, for example, chains of elements in the columns will already have been sorted by $\sr$.

\section{Transverse sort-invariance}

Throughout this section, let $P$ be a transverse non-messing-up poset with transverse gridwork $(\R,\C)$.  For the sake of clarity, suppose that in any labeling of $P$, the elements are labeled by $\{1, 2, \ldots, |P|\}$.

\begin{defn}
A labeling $\L$ of the poset $P$ is \emph{sort-invariant} if $\sr\sc(\L) = \sc\sr(\L)$.
\end{defn}

In other words, a labeling $\L$ is sort-invariant if the operations $\sr$ and $\sc$ commute on $\L$.  Our goal is to characterize sort-invariant labelings of transverse non-messing-up posets.

\begin{defn}
Let $x$ and $y$ be elements of non-messing-up poset, that are in neither the same row nor the same column.  Suppose that $x$ is in row $\bm{r_x}$ and column $\bm{c_x}$, while $y$ is in row $\bm{r_y}$ and column $\bm{c_y}$ (thus $\bm{r_x} \neq \bm{r_y}$ and $\bm{c_x} \neq \bm{c_y}$).  The \emph{corner-set of $x$ and $y$} is the subset
\begin{eqnarray}
\label{eqn:corner-set} \sq(x,y) &=& (\bm{r_x} \cup \bm{r_y}) \cap (\bm{c_x} \cup \bm{c_y}) \\
\nonumber &=& (\bm{r_x} \cap \bm{c_x}) \cup (\bm{r_y} \cap \bm{c_y}) \cup (\bm{r_x} \cap \bm{c_y}) \cup (\bm{c_x} \cap \bm{r_y}).
\end{eqnarray}
If $x$ and $y$ are in either the same row or the same column, then set $\sq(x,y) = \emptyset$.
\end{defn}

Because $P$ is transverse, if $x$ and $y$ are in neither the same row nor the same column in $P$, then we have
\begin{equation}\label{eqn:corner-set-small} 
\sq(x,y) = \{x,y\} \cup (\bm{r_x} \cap \bm{c_y}) \cup (\bm{c_x} \cap \bm{r_y}),
\end{equation}
and this $\sq(x,y)$ contains at most four elements.

The nomenclature here refers to the fact that the set $\sq(x,y)$ looks like the four corners of a diamond containing $x$ and $y$ in $P$, when the edges of the diamond must be either rows or columns in the poset.  Note that because non-messing-up posets (and their rows and columns) arise from convex subposets of cylinder posets, if $x$ and $y$ are comparable in $P$, then both $\bm{r_x} \cap \bm{c_y}$ and $\bm{c_x} \cap \bm{r_y}$ are nonempty.

In the case of the product of two chains, we will show that a labeling is sort-invariant if and only if corner-sets with extreme values on opposite sides are avoided.  In general, a corner-set may have fewer than four elements, and we must avoid more particular patterns with these smaller sets, as stated in the second and third points of Definition~\ref{defn:good}.

\begin{defn}\label{defn:good}
A nonempty corner-set $\sq(x,y)$ is \emph{bad for $\L$} (or simply \emph{bad} if $\L$ is clear from the context) if $|\sq(x,y)| > 2$ and one of the following situations is satisfied:
\begin{itemize}
\item $\sq(x,y) = \{x,y,w,z\}$ and
$$\L(x),\L(y) > \L(w),\L(z) \text{\ \ \ or \ \ } \L(x),\L(y) < \L(w),\L(z), $$
\item $\sq(x,y) = \{x,y,z\}$ with $x,y<z$ and $\L(x), \L(y) >\L(z)$, or
\item $\sq(x,y) = \{x,y,z\}$ with $x,y>z$ and $\L(x), \L(y) <\L(z)$.
\end{itemize}
If $\sq(x,y)$ is not bad for $\L$, then it is \emph{good for $\L$} (or simply \emph{good} if $\L$ is clear from the context).
\end{defn}

\begin{ex}
The following four posets and labelings have only bad corner-sets.  
\begin{center}
\begin{tikzpicture}[scale=.8]
\draw (0,0) -- (1, 1);
\draw (1,1) -- (2,0);
\draw (2,0) -- (1, -1);
\draw (1, -1) -- (0,0);
\fill (0,0) circle (2pt) node[left] {$1$};
\fill (1,1) circle (2pt) node[above] {$4$};
\fill (2,0) circle (2pt) node[right] {$2$};
\fill (1,-1) circle (2pt) node[below] {$3$};
\end{tikzpicture}
\hspace{.5in}
\begin{tikzpicture}[scale=.8]
\draw (0,0) -- (1, 1);
\draw (1,1) -- (2,0);
\draw (2,0) -- (1, -1);
\draw (1, -1) -- (0,0);
\fill (0,0) circle (2pt) node[left] {$3$};
\fill (1,1) circle (2pt) node[above] {$2$};
\fill (2,0) circle (2pt) node[right] {$4$};
\fill (1,-1) circle (2pt) node[below] {$1$};
\end{tikzpicture}\\
\ \\
\begin{tikzpicture}[scale=.8]
\draw (0,0) -- (1, 1);
\draw (1,1) -- (2,0);
\fill (0,0) circle (2pt) node[left] {$2$};
\fill (1,1) circle (2pt) node[above] {$1$};
\fill (2,0) circle (2pt) node[right] {$3$};
\end{tikzpicture}
\hspace{.5in}
\begin{tikzpicture}[scale=.8]
\draw (0,1) -- (1, 0);
\draw (1,0) -- (2,1);
\fill (0,1) circle (2pt) node[left] {$1$};
\fill (1,0) circle (2pt) node[below] {$3$};
\fill (2,1) circle (2pt) node[right] {$2$};
\end{tikzpicture}
\end{center}
\end{ex}

\begin{ex}
The following six posets and labelings have only good corner-sets.
\begin{center}
\begin{tikzpicture}[scale=.8]
\draw (0,0) -- (1, 1);
\draw (1,1) -- (2,0);
\draw (2,0) -- (1, -1);
\draw (1, -1) -- (0,0);
\fill (0,0) circle (2pt) node[left] {$1$};
\fill (1,1) circle (2pt) node[above] {$2$};
\fill (2,0) circle (2pt) node[right] {$4$};
\fill (1,-1) circle (2pt) node[below] {$3$};
\end{tikzpicture}
\hspace{.5in}
\begin{tikzpicture}[scale=.8]
\draw (0,0) -- (1, 1);
\draw (1,1) -- (2,0);
\draw (2,0) -- (1, -1);
\draw (1, -1) -- (0,0);
\fill (0,0) circle (2pt) node[left] {$1$};
\fill (1,1) circle (2pt) node[above] {$2$};
\fill (2,0) circle (2pt) node[right] {$3$};
\fill (1,-1) circle (2pt) node[below] {$4$};
\end{tikzpicture}\\
\ \\
\begin{tikzpicture}[scale=.8]
\draw (0,0) -- (1, 1);
\draw (1,1) -- (2,0);
\fill (0,0) circle (2pt) node[left] {$1$};
\fill (1,1) circle (2pt) node[above] {$2$};
\fill (2,0) circle (2pt) node[right] {$3$};
\end{tikzpicture}
\hspace{.5in}
\begin{tikzpicture}[scale=.8]
\draw (0,0) -- (1, 1);
\draw (1,1) -- (2,0);
\fill (0,0) circle (2pt) node[left] {$1$};
\fill (1,1) circle (2pt) node[above] {$3$};
\fill (2,0) circle (2pt) node[right] {$2$};
\end{tikzpicture}
\hspace{.5in}
\begin{tikzpicture}[scale=.8]
\draw (0,1) -- (1, 0);
\draw (1,0) -- (2,1);
\fill (0,1) circle (2pt) node[left] {$1$};
\fill (1,0) circle (2pt) node[below] {$2$};
\fill (2,1) circle (2pt) node[right] {$3$};
\end{tikzpicture}
\hspace{.5in}
\begin{tikzpicture}[scale=.8]
\draw (0,1) -- (1, 0);
\draw (1,0) -- (2,1);
\fill (0,1) circle (2pt) node[left] {$2$};
\fill (1,0) circle (2pt) node[below] {$1$};
\fill (2,1) circle (2pt) node[right] {$3$};
\end{tikzpicture}
\end{center}
\end{ex}

We are now able to state the main result of this paper, which characterizes exactly when a labeling of a transverse non-messing-up poset is sort-invariant in terms of its corner-sets.

\begin{thm}\label{thm:sort-invariance}
Let $P$ be a transverse non-messing-up poset and $\L$ a labeling of $P$.  The labeling $\L$ is sort-invariant if and only if each nonempty corner-set in $P$ is good.
\end{thm}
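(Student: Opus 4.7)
For the ``only if'' direction of the biconditional, I argue by contrapositive: suppose some corner-set $\sq(x, y)$ is bad. The key tool is the pointwise formula that $\sc(\L)(p)$ equals the $i_p$-th smallest label on the chain $c_p$, where $i_p$ is the position of $p$ within $c_p$; composing with the row sort, $\sr\sc(\L)(p)$ is the $j_p$-th smallest value among $\{\sc(\L)(q) : q \in r_p\}$, with $j_p$ the position of $p$ in $r_p$. The description of $\sc\sr(\L)$ is symmetric. For a $4$-element bad corner-set with, say, $\L(x), \L(y) < \L(w), \L(z)$, I would compute both $\sr\sc(\L)(p)$ and $\sc\sr(\L)(p)$ for each $p \in \sq(x, y)$; the strict separation of labels forces some element (for instance, an extreme corner) to receive different labels under the two sortings. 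The two $3$-element degenerate cases of Definition~\ref{defn:good} are handled by the same formulas, with one corner simply missing from the cylinder subposet.

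For the ``if'' direction, I would proceed by induction on $|P|$. Since $\sr\sc(\L)$ and $\sc\sr(\L)$ are both linear extensions of $P$ by Theorem~\ref{thm:nmut}, they already agree on every comparable pair. The inductive step centers on the element receiving label $|P|$ under each sort: I would show, using that every corner-set is good, that this label lands on the same maximal element $M$ under both sortings. Granting this, one removes $M$ to obtain a smaller convex subposet of the same cylinder poset (hence still transverse non-messing-up), on which the induced labeling still has every corner-set good---corner-sets of the smaller poset are contained in, or are truncations of, corner-sets of $P$, and one checks that truncation does not create a bad type-$2$ or type-$3$ corner-set. The inductive hypothesis then delivers equality on $P \setminus \{M\}$, hence on $P$.

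The main obstacle is the ``same maximum'' claim underpinning the inductive step. A convex subposet of a cylinder may have several maximal elements, so a priori the two sorts could deposit label $|P|$ at different maxima. Ruling this out requires using goodness globally, not locally: I would trace the movement of the largest label as an alternating row/column walk through $P$ and argue that any divergence in its endpoint would force some corner-set along the way to be bad. This cascading analysis, which makes the globally-defined sort operations visible through the local condition on corner-sets, is the technical heart of the argument.
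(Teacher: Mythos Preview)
Your approach differs substantially from the paper's, and both directions carry genuine gaps.

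For the contrapositive direction, the pointwise formula for $\sr\sc(\L)(p)$ is correct, but its value depends on \emph{all} labels in $p$'s column and on the column-sorted labels of every element in $p$'s row, not merely on the four corner labels. Asserting that ``strict separation'' of those four values forces a discrepancy at some $p\in\sq(x,y)$ is not justified: arbitrarily many intermediate labels may lie in the relevant rows and columns, and nothing in your sketch isolates the corner-set from that ambient data. The paper circumvents any such local computation by thresholding: for each $k$ it replaces $\L$ by a two-valued labeling ($1$ on $\{1,\dots,k\}$, $2$ elsewhere), observes that $\L$ is sort-invariant iff every thresholded labeling is, and then shows that for two-valued labelings of a product of chains sort-invariance is equivalent to a nesting condition $S_{\bm{c_1}}\subseteq S_{\bm{c_2}}\subseteq\cdots$ on the columns' supports---which is exactly the absence of a bad corner-set.

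For the ``if'' direction, the induction is not well-posed as stated. Even granting that label $|P|$ lands at the same maximal element $M$ under both procedures, removing $M$ does not produce a smaller instance of the same problem: the label $|P|$ originally sat at some \emph{other} element of $P$ and participated in the sorts of that element's row and column, so there is no evident labeling $\mu$ of $P\setminus\{M\}$ with good corner-sets for which $\sr\sc(\mu)=\sr\sc(\L)|_{P\setminus\{M\}}$ and simultaneously $\sc\sr(\mu)=\sc\sr(\L)|_{P\setminus\{M\}}$. (Taking $\mu=\sr\sc(\L)|_{P\setminus\{M\}}$, a linear extension, makes both sides equal $\mu$ and collapses the claim to the very equality you are trying to prove.) The paper again avoids this by working with the two-valued reduction, proving the product-of-chains case directly via the nesting criterion, then extending to convex subposets by padding with extremal labels and finally to cylinder posets by cutting and gluing periodic planar copies---a passage to the cylinder that your outline does not address at all.
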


In other words, sort-invariance of a transverse non-messing-up poset (that is, a convex subposet of a cylinder poset) is characterized by a forbidden subpattern: a bad corner-set.

Theorem~\ref{thm:sort-invariance} is proved in three steps, two of which we highlight as propositions.  The first step is to analyze sort-invariance for the product of two chains (the poset version of the matrix), the second step is to consider convex subposets of the product of two chains, and the final step is to look at the cylinder.

In the next proposition, we describe sort-invariance for posets which are the product of two chains.

\begin{prop}\label{prop:arrays}
Let the poset $P$ be the product of two chains, and let $\L$ be a labeling of $P$.  The labeling $\L$ is sort-invariant if and only if each nonempty corner-set in $P$ is good.
\end{prop}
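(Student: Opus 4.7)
The plan is to pass to level sets. For any label value $v$, let $T_v = \{p \in P : \L(p) \le v\}$, and write $r_k^v$ (resp.\ $c_k^v$) for the number of elements of $T_v$ in the $k$-th row (resp.\ column) of $P$. Since $P$ is the product of two chains, Theorem~\ref{thm:nmut} guarantees that both $\sr\sc(\L)$ and $\sc\sr(\L)$ are simultaneously row- and column-sorted; consequently, each level set $S_v(M) = \{(i,j) : M(i,j) \le v\}$ is a Young-diagram order ideal anchored at the minimum of $P$, and the desired equality $\sr\sc(\L) = \sc\sr(\L)$ is equivalent to the equality of level sets $S_v(\sr\sc(\L)) = S_v(\sc\sr(\L))$ for every $v$.

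First I would unwind the sort operations to express each level set in terms of $T_v$. A short calculation, using that the $(i,j)$-entry of $\sc(\L)$ is the $i$-th smallest entry in column $j$ of $\L$, shows that row $i$ of $\sc(\L)$ contains exactly $|\{k : c_k^v \ge i\}|$ entries that are $\le v$. After sorting rows, the $j$-th column of $S_v(\sr\sc(\L))$ has height equal to the $j$-th largest of $c_1^v, c_2^v, \ldots$. By symmetry, the $i$-th row of $S_v(\sc\sr(\L))$ has width equal to the $i$-th largest row-count $r_k^v$. The two Young diagrams therefore agree if and only if the sorted column-count sequence of $T_v$ and the sorted row-count sequence of $T_v$ form conjugate partitions.

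Next I would invoke the standard combinatorial characterization of $0/1$ matrices: the sorted row sums of a $0/1$ matrix are conjugate to the sorted column sums if and only if its rows are totally ordered by inclusion, equivalently iff the matrix contains no $2 \times 2$ submatrix equal to $\bigl(\begin{smallmatrix} 1 & 0 \\ 0 & 1 \end{smallmatrix}\bigr)$ or $\bigl(\begin{smallmatrix} 0 & 1 \\ 1 & 0 \end{smallmatrix}\bigr)$. This is the main technical point: the ``nested $\Rightarrow$ conjugate'' direction is immediate, since after permuting rows by size and columns appropriately one obtains a genuine Young diagram, while the reverse direction requires observing that any non-nested pair of rows produces a strict deficit in the Gale--Ryser majorization, obstructing conjugacy.

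Finally, in the product of two chains every nonempty corner set consists of the four corners of a rectangle, since every row meets every column. A forbidden $2 \times 2$ pattern in $T_v$ is precisely a configuration where two diagonally-opposite corners of a rectangle carry labels $\le v$ and the other two carry labels $> v$---that is, the two smallest labels among the four lie on one diagonal---which is exactly the definition of a bad corner set. Conversely, any bad corner set produces such a pattern in $T_v$ when $v$ is chosen to be the larger of its two smaller corner labels. Ranging over $v$ converts the sort-invariance condition into the absence of any bad corner set, yielding the proposition.
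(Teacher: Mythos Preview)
Your proof is correct and follows essentially the same route as the paper: both reduce to $0/1$ labelings via level sets, observe that the two double-sorts produce Young diagrams whose shapes are the sorted column-count and sorted row-count partitions, and then identify the equality of these shapes (conjugacy) with a nesting condition on the supports that is in turn equivalent to the absence of a forbidden $2\times 2$, i.e., a bad corner-set. Your write-up is in fact a bit more explicit than the paper's at the key technical step, since you justify the ``conjugacy $\Leftrightarrow$ nesting'' equivalence via the equality case of Gale--Ryser, whereas the paper simply asserts it.
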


\begin{proof}
First, note that $P$ is a transverse non-messing-up poset.  Let $\R$ and $\C$ be the rows and columns of $P$, which are analogous to the rows and columns in a matrix.  The row which intersects each column in the minimum position of the column will be called the ``first'' row, the row intersecting in the position above that will be the ``second'' row, and so on.  We similarly name the columns.

We first note that sort-invariance is equivalent to saying that for all $k \in \{1, \ldots, |P|\}$, the shape formed by $\{1, \ldots, k\}$ in $\sr\sc(\L)$ is the same as the shape formed by $\{1, \ldots, k\}$ in $\sc\sr(\L)$.  Thus we can replace the labels in $\L$ by just $\{1_k,2_k\}$, where $1_k$ replaces each element of $\{1, \ldots, k\}$ and $2_k$ replaces each element of $\{k+1, \ldots, |P|\}$, and look at the resulting placements of the $1_k$s after $\sr\sc$ and $\sc\sr$, with the understanding that $2_k > 1_k$.

For the remainder of the proof we will drop the subscript $k$ and just look at a labeling by $1$s and $2$s, and decide when $\sr$ and $\sc$ commute on this filling.

The operation $\sc$ pushes all the $1$s to the bottom of each column.  One interpretation of $\sr\sc$ is that it sorts the set of columns so that the column in which the most $1$s appears is now the first column, the column in which the next most $1$s appears is now the second column, and so on.  The shape of $1$s resulting from $\sc\sr$ can be described analogously.

For these shapes to be equal, we need the $i$th most $1$s appearing in any row to equal the number of columns containing at least $i$ $1$s.  This is equivalent to the following criterion: let $S_{\bm{c}} \subseteq R$ be the set of rows in which column $\bm{c}$ has a $1$; index the columns $\bm{c_1}, \bm{c_2}, \ldots$ so that $\bm{c_{i+1}}$ has at least as many $1$s as $\bm{c_i}$ does; then we must have
\begin{equation}\label{eqn:hierarchy}
S_{\bm{c_1}} \subseteq S_{\bm{c_2}} \subseteq \ldots.
\end{equation}
(Equivalently, the roles of ``row'' and ``column'' can be swapped in the previous discussion.)

For example, the following is a labeling satisfying \eqref{eqn:hierarchy}, where the poset has been rotated to highlight the rows and the columns.
$$\begin{array}{llllll}
2 & 1 & 2 & 2 & 1 & 1\\
2 & 1 & 1 & 2 & 1 & 1\\
2 & 1 & 2 & 2 & 2 & 2\\
2 & 1 & 2 & 2 & 2 & 1\\
2 & 2 & 2 & 2 & 2 & 2\\
2 & 1 & 1 & 2 & 1 & 1\\
\uparrow & \uparrow & \uparrow & \uparrow & \uparrow & \uparrow\\
\bm{c_1} & \bm{c_6} & \bm{c_3} & \bm{c_2} & \bm{c_4} & \bm{c_5} 
\end{array}$$
The $\bm{c_i}$ designations are given below each column.  Note that $\bm{c_1}$ and $\bm{c_2}$ can be swapped.

We claim that the inclusions of \eqref{eqn:hierarchy} are equivalent to having no bad corner-set.  Certainly if the inclusions of \eqref{eqn:hierarchy} hold then there is no such bad corner-set.  Conversely, if there is no bad corner-set then consider any pair of columns $\bm{c}$ and $\bm{c'}$, where $\bm{c}$ has at least as many $1$s as $\bm{c'}$.  If there is a row in which $\bm{c}$ has a $1$ but $\bm{c'}$ does not, then every row in which $\bm{c'}$ has a $1$ must be a row in which $\bm{c}$ does as well.  This implies \eqref{eqn:hierarchy}, and completes the proof.
\end{proof}

\begin{prop}\label{prop:convex}
Let the poset $P$ be a convex subposet of the product of two chains, and let $\L$ be a labeling of $P$.  The labeling $\L$ is sort-invariant if and only if each nonempty corner-set in $P$ is good.
\end{prop}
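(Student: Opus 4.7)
The strategy is to reduce Proposition~\ref{prop:convex} to Proposition~\ref{prop:arrays} by embedding $P$ as a convex subposet of a product of two chains $Q$ and extending the labeling $\L$ to a labeling $\L'$ of $Q$ whose sort behavior tracks that of $\L$. Exactly as in the proof of Proposition~\ref{prop:arrays}, one may first reduce to the case where $\L$ takes only the values $1$ and $2$, since sort-invariance is equivalent to the shapes of $\{1, \ldots, k\}$ agreeing under $\sr\sc$ and $\sc\sr$ for every $k$.

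Fix an ambient product $Q = \bm{m} \times \bm{n}$ containing $P$, and for each $q \in Q \setminus P$ set
$$ \L'(q) = \begin{cases} 2 & \text{if there exists } p \in P \text{ with } p \le q,\\ 1 & \text{otherwise.} \end{cases} $$
Convexity of $P$ then forces the padding labels to respect the gridwork: in every row (respectively, column) of $Q$ that meets $P$, the padding elements preceding $P$'s interval all have label $1$ and those following all have label $2$. Indeed, were there a padding element below $P$ in column $j$ with some $p \in P$ beneath it, convexity applied to the interval from $p$ to the bottommost element of $P$ in column $j$ would force $P$ to extend further down in column $j$, a contradiction. Rows and columns of $Q$ disjoint from $P$ split similarly into a lower $1$-segment and an upper $2$-segment.

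It follows that the sort operations on $Q$ fix the padding labels and, when restricted to $P$, coincide with the sort operations on $P$. Hence $\L$ is sort-invariant on $P$ if and only if $\L'$ is sort-invariant on $Q$; by Proposition~\ref{prop:arrays} the latter is equivalent to every corner-set in $Q$ being good. It remains to match good corner-sets between $Q$ and $P$. For pairs $x, y \in P$, a case analysis on $|\sq(x,y)|$ within $P$ (which is $2$, $3$, or $4$) uses convexity to compute the extension labels at the missing diamond corners and to confirm that $\sq(x,y)$ is good in $P$ if and only if it is good in $Q$. For corner-sets in $Q$ with at least one padding vertex, convexity rules out the opposite-constant pattern required for badness: for example, if $x$ and $y$ are both padding with label $1$, then the common minimum of the diamond must also have label $1$, forbidding the $\{1,1\} \mid \{2,2\}$ split. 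This final case analysis is the main technical obstacle, and it relies essentially on convexity of $P$ to prevent the extension from either creating or masking a bad corner-set.
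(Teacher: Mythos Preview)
Your proof is correct and follows essentially the same strategy as the paper: extend $\L$ to a labeling of the ambient product of chains by padding with extremal values, then invoke Proposition~\ref{prop:arrays}. The paper's version is terser---it pads with $1$ below $P$ and $|P|$ elsewhere (the dual of your rule on elements incomparable to $P$) and asserts the two equivalences without the preliminary reduction to $\{1,2\}$-labels or the corner-set case analysis you sketch---but the underlying idea is identical.
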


\begin{proof}
Suppose $P$ is a convex subposet of $P'$, where $P'$ is a product of two chains.  Extend the labeling $\L$ of $P$ to a labeling $\L'$ of $P'$ by giving the label $1$ to every element of $P'\setminus P$ less than an element of $P$, and the label $|P|$ to all other elements of $P' \setminus P$.  Clearly $\L$ is sort-invariant if and only if $\L'$ is sort-invariant.  Similarly, $\L$ has a bad corner-set if and only if $\L'$ has a bad corner-set.  Proposition~\ref{prop:arrays} links these two concepts for $\L'$, and thus we conclude that $\L$ is sort-invariant if and only if all of its non-empty corner-sets are good.
\end{proof}

\begin{proof}[Proof of Theorem~\ref{thm:sort-invariance}]
A labeling of a poset is sort-invariant if and only if the labeling of each connected component of the poset is sort-invariant.  Thus we need only prove the result for a connected poset.  The remainder of the proof resembles that of \cite[Theorem 6]{nmut}.

Let $P$ be a finite convex subposet of $Cyl_{k,n}$.  Cut $Cyl_{k,n}$ to get a preimage of $P$ in the plane.  Glue together copies of this poset via the identifications on the cylinder.  After perhaps removing elements at the sides of the planar poset, this is a convex subposet of a product of two chains.  For the labeling $\L$ of $P$, label every preimage of $x$ in the plane by $\L(x)$.  Glue enough copies of the poset so that after the two sorting procedures, sufficiently many of the centermost copies in the plane have the labels they would have had on the cylinder.  This is possible because only finitely many elements cross over a line of identification.  Call this glued-together poset $P'$, and its labeling $\L'$.

The labeling $\L$ has a bad corner-set if and only if $\L'$ does.  Thus, by Proposition~\ref{prop:convex}, $\L$ has a bad corner-set if and only if $\L'$ is not sort-invariant.  If $\L'$ is sort-invariant then certainly $\L$ is as well.  Therefore, if $\L$ has no bad corner-sets, then $\L$ is sort-invariant.

On the other hand, suppose that $\L$ has a bad corner-set.  Thus $\L'$ has a bad corner-set, and is not sort-invariant.  We must be sure that the sort-invariance fails somewhere in one of the centermost copies of the cut poset.  This will ensure that $\L$ is also not sort-invariant, which will conclude the proof.  Since Proposition~\ref{prop:arrays} characterizes sort-invariance in the plane by forbidding bad corner-sets, interacting with the labels appearing in a bad corner-set during $\sr$ or $\sc$ must lead to the failure of the sort-invariance.  By construction of $P'$, such failure must appear among the centermost copies of the cut poset.  Since these centermost copies have the same labels that they would have had on the cylinder, this means that $\L$ is not sort-invariant.
\end{proof}

\section{Non-split sort-invariance}\label{sec:non-transverse}

One might hope that sort-invariance for non-transverse non-messing-up posets can be characterized by an analogous statement about bad corner-sets.  Unfortunately the obvious generalization of this concept is not the answer.  More precisely, recall the definition of a corner-set, and in particular equation~\eqref{eqn:corner-set}.  In a non-transverse poset, the subsequent equation~\eqref{eqn:corner-set-small} does not necessarily hold, and the corner-set might well have more than four elements.  We can extend the definition of ``bad'' in the obvious way, to say that a corner-set is bad if there exist elements 
\begin{eqnarray*}
x' &\in& \bm{r_x} \cap \bm{c_x},\\
y' &\in& \bm{r_y} \cap \bm{c_y},\\
u &\in& \bm{r_x} \cap \bm{c_y}, \text{ and}\\
v &\in& \bm{r_y} \cap \bm{c_x},
\end{eqnarray*}
so that the labels of these elements either satisfy
$$\L(x'), \L(y') > \L(u), \L(v)$$
or
$$\L(x'), \L(y') < \L(u), \L(v).$$
However, avoidance of such a bad corner-set is not enough to guarantee sort-invariance in a general (non-transverse) non-messing-up poset, as demonstrated in the following example.

\begin{ex}
Figure~\ref{fig:poset-ex1} gives a non-transverse non-messing-up poset $P$ (as described in \cite{nmut}) and a labeling $\L$ of $P$.
\begin{figure}[htbp]
\begin{tikzpicture}[scale=.8]
\draw[densely dashed] (0,0) -- (1,1);
\draw[densely dashed] (.96,1) -- (.96,2);
\draw (1.04,1) -- (1.04,2);
\draw (1,2) -- (0,3);
\draw (0,0) -- (-1,1.5);
\draw[densely dashed] (0,3) -- (-1, 1.5);
\draw[fill] (0,0) circle (2pt) node[below] {$5$};
\draw[fill] (1,1) circle (2pt) node[right] {$1$};
\draw[fill] (1,2) circle (2pt) node[right] {$3$};
\draw[fill] (0,3) circle (2pt) node[above] {$2$};
\draw[fill] (-1,1.5) circle (2pt) node[left] {$4$};
\end{tikzpicture}
\caption{A non-messing-up poset $P$ whose rows are drawn with dashed lines and columns with solid.  A labeling $\L$ of $P$ is also given.}\label{fig:poset-ex1}
\end{figure}
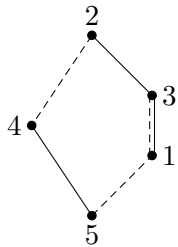

Notice that, in the sense described at the beginning of this section, the labeling $\L$ is good.  However, it is not sort-invariant, as demonstrated in Figure~\ref{fig:poset-ex2}.
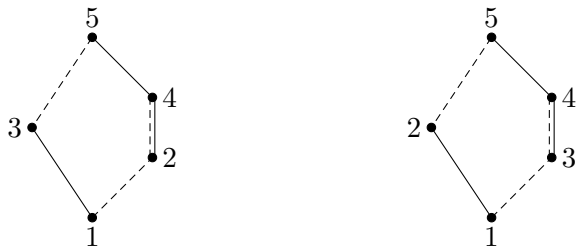
\begin{figure}[htbp]
\begin{tikzpicture}[scale=.8]
\draw[densely dashed] (0,0) -- (1,1);
\draw[densely dashed] (.96,1) -- (.96,2);
\draw (1.04,1) -- (1.04,2);
\draw (1,2) -- (0,3);
\draw (0,0) -- (-1,1.5);
\draw[densely dashed] (0,3) -- (-1, 1.5);
\draw[fill] (0,0) circle (2pt) node[below] {$1$};
\draw[fill] (1,1) circle (2pt) node[right] {$2$};
\draw[fill] (1,2) circle (2pt) node[right] {$4$};
\draw[fill] (0,3) circle (2pt) node[above] {$5$};
\draw[fill] (-1,1.5) circle (2pt) node[left] {$3$};
\end{tikzpicture}
\hspace{1in}
\begin{tikzpicture}[scale=.8]
\draw[densely dashed] (0,0) -- (1,1);
\draw[densely dashed] (.96,1) -- (.96,2);
\draw (1.04,1) -- (1.04,2);
\draw (1,2) -- (0,3);
\draw (0,0) -- (-1,1.5);
\draw[densely dashed] (0,3) -- (-1, 1.5);
\draw[fill] (0,0) circle (2pt) node[below] {$1$};
\draw[fill] (1,1) circle (2pt) node[right] {$3$};
\draw[fill] (1,2) circle (2pt) node[right] {$4$};
\draw[fill] (0,3) circle (2pt) node[above] {$5$};
\draw[fill] (-1,1.5) circle (2pt) node[left] {$2$};
\end{tikzpicture}
\caption{The left-hand poset is $\sr\sc(\L)$, and the right-hand poset is $\sc\sr(\L)$.  They are unequal, and thus $\L$ is not sort-invariant.}\label{fig:poset-ex2}
\end{figure}
\end{ex}

It is disappointing that the equivalence between sort-invariance and avoidance of a bad corner-set does not carry over to this more general setting, and it remains open to characterize sort-invariance of these non-transverse non-messing-up posets.

\section{Preferred sorting procedures for matrices}

We conclude with a brief discussion of a different but related question regarding sorted matrices; namely, whether a given sorted matrix $A$ is more likely to have arisen from $\sr\sc$ or from $\sc\sr$.  Certainly there is a matrix $M$ which can yield $A$ under both of these operations, namely $M=A$, but there may be a significant preference for $\sr\sc$, say, in general.

Say that the matrix is \emph{sorted} if the data in each row are non-decreasing from left to right, and if the data in each column are non-decreasing from top to bottom.  Note that the transpose of a sorted matrix is also necessary sorted. Given a sorted matrix $A$, there are many possible original matrices $M$ with $\sr\sc(M) = A$ or with $\sc\sr(M) = A$.  Furthermore, the set of such $M$ depends on whether the rows were sorted first and then the columns, or vice versa.  For a given sorted matrix, it may be that one of these options is preferred to the other.  

Without loss of generality, suppose that the data in the matrix is the set $\{1, \ldots, rc\}$, where the matrix has $r$ rows and $c$ columns.  Using \cite[Exercise 7.20b]{ec1}, we can compute the number of matrices $M$ for which $\sr\sc(M) = A$, and likewise for which $\sc\sr(M) = A$.  First we define the function $\h_A$ on the set $\{1, \ldots, rc\}$ of entries in $A$: if $i$ is in the first row of $A$ then $\h_A(i) = 1$; otherwise,
$$\h_A(i) = \# \{j < i : j \text{ is in the row immediately above $i$ in $A$ and not to the left of $i$}\}.$$
Set
$$\h_A(A) = \prod \h_A(i).$$
It follows from \cite[Exercise 7.20b]{ec1} that the number of matrices $M$ for which $\sr\sc(M) = A$ is
$$\h_A(A) \cdot (r!)^c \cdot c!.$$
Likewise, the number of $M$ for which $\sc\sr(M) = A$ is
$$\h_{A^T}(A) \cdot (c!)^r \cdot r!,$$
where $A^T$ is the (sorted) transpose of the sorted matrix $A$.

Therefore we can make the following conclusion about preferred sorting procedures.

\begin{prop}\label{prop:preferred}
Suppose that $A$ is a sorted $r$-by-$c$ matrix.  Let $M$ be an $r$-by-$c$ matrix chosen at random, and flip a fair coin to determine whether to sort $M$ by performing $\sr\sc$ or $\sc\sr$.  Then the probability that $A = \sr\sc(M)$ is
$$\frac{\h_A(A) \cdot (r!)^{c-1}}{\h_A(A) \cdot (r!)^{c-1} + \h_{A^T}(A) \cdot (c!)^{r-1}}.$$
\end{prop}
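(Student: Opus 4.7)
The plan is to interpret the probability as a Bayesian conditional probability and then apply the formulas stated just before the proposition. In the underlying probability space, a pair $(M, \omega)$ is chosen with $M$ uniform among the $(rc)!$ matrices having entries $\{1, \ldots, rc\}$ and $\omega \in \{\sr\sc, \sc\sr\}$ the outcome of an independent fair coin that selects which sorting procedure is applied. The event ``$A = \sr\sc(M)$'' is the event $\{\omega = \sr\sc\} \cap \{\sr\sc(M) = A\}$, conditional on the observation that the sorting procedure actually returned $A$; this conditional reading is the only one consistent with the claimed formula.

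The first step is to recall the counts from \cite[Exercise 7.20b]{ec1}, already displayed just above the proposition:
\[
\#\{M : \sr\sc(M) = A\} = \h_A(A) \cdot (r!)^c \cdot c!,
\qquad
\#\{M : \sc\sr(M) = A\} = \h_{A^T}(A) \cdot (c!)^r \cdot r!.
\]
Dividing each count by the total number $(rc)!$ of matrices and multiplying by $\tfrac{1}{2}$ (the probability of the corresponding coin outcome) yields the joint probabilities $P(\omega = \sr\sc,\ \text{output} = A)$ and $P(\omega = \sc\sr,\ \text{output} = A)$.

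The second and final step is arithmetic: form the ratio of the first joint probability to the sum of the two, cancel the common factor $\tfrac{1}{2(rc)!}$, and then divide the numerator and denominator by $r! \cdot c!$ to reduce the expression to the form claimed. No genuine obstacle arises, since all the combinatorial work is already contained in the two cited counts. The only point requiring care is the initial interpretation of the probability as a conditional one, conditioned on the event that the sorting output equals $A$; without this reading, the formula cannot be obtained.
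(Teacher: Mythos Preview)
Your proposal is correct and matches the paper's approach: the paper gives no formal proof but states the proposition as an immediate consequence of the two counting formulas from \cite[Exercise 7.20b]{ec1}, and you have correctly filled in the Bayesian computation and the cancellation of the common factor $r!\cdot c!$. Your observation that the statement must be read as a conditional probability (given that the sorting output equals $A$) is exactly right and is implicit in the paper's framing of ``preferred sorting procedures.''
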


\section*{Acknowledgements}
Sincere gratitude is due to Peter Winkler for very helpful and entertaining discussions.


\begin{thebibliography}{9}

\bibitem{boerner} H.~Boerner, \textit{Darstellungen von Gruppen}, Springer-Verlag, Berlin, 1955, second edition 1967.

\bibitem{gale-karp} D.~Gale and R.~M.~Karp, A phenomenon in the theory of sorting, \textit{J.~Comput.~System Sci.} \textbf{6} (1972), 104--115.

\bibitem{knuth} D.~E.~Knuth, \textit{The Art of Computer Programming, vol.~3, Sorting and Searching}, Addison-Wesley, Reading, MA, 1973, second edition 1998.

\bibitem{ec1} R.~P.~Stanley, \textit{Enumerative Combinatorics, vol.~1}, Cambridge Studies in Advanced Mathematics, no.~49, Cambridge University Press, Cambridge, 1997.

\bibitem{nmut} B.~E.~Tenner, A non-messing-up phenomenon for posets, \textit{Ann.~Comb.} \textbf{11} (2007), 101--114

\bibitem{winkler} P.~Winkler, \textit{Mathematical Mind-Benders}, A.~K.~Peters, Ltd., Wellesley, MA, 2007.

\end{thebibliography}
\end{document}